\tikzset{middlearrow/.style={
        decoration={markings,
            mark= at position 0.5 with {\arrow{#1}} ,
        },
        postaction={decorate}
    }
}
\newcolumntype{C}{>{$}c<{$}} 
\newcommand{\Mod}[1]{{\hskip -6pt}\pmod{#1}}
\newtheorem{theorem}{Theorem}[section]
\newtheorem{corollary}{Corollary}[section]
\newtheorem{lemma}[theorem]{Lemma}
\newtheorem{definition}[theorem]{Definition}
\title{Generalized Sierpi\'nski Numbers}
\author{Michael Filaseta\\
Mathematics Department, 
University of South Carolina \\
Columbia, SC 29208, 
Email:  filaseta@math.sc.edu
\and
Robert Groth \\
Mathematics Department, 
University of South Carolina \\
Columbia, SC 29208, 
Email: rgroth@email.sc.edu
\and
Thomas Luckner\\
Mathematics Department, 
University of South Carolina \\
Columbia, SC 29208, 
Email: luckner@email.sc.edu}
\date{\today}
\begin{document}

\maketitle

\begin{abstract}
A Sierpi\'nski number is a positive odd integer $k$ such that $k \cdot 2^n + 1$ is composite for all
positive integers $n$. Fix an integer $A$ with $2 \le A$. We show that there exists a positive odd integer $k$ such that
$k\cdot a^n + 1$ is composite for all integers $a \in [2, A]$ and all $n \in \mathbb{Z}^+$.
\end{abstract}

\section{Introduction}
A \textit{covering system (on $n$)} is a finite collection of congruence classes $\mathcal{C}=\{n\equiv r_j\pmod{m_j}\}$ such that every integer is a member of at least one class in $\mathcal{C}$.  P.~Erd{\H o}s \cite{pe} used a covering system to show that there is an arithmetic progression of positive integers, none of which can be written as a prime plus a power of $2$. Another early application of covering systems is due to W.~Sierpi\'nski \cite{sierpinski} who showed that there is an arithmetic progression of positive integers $k$ having the property that $k \cdot 2^n + 1$ is composite for all positive integers $n$.  Specifically, Sierpi\'nski observed the implications
\[
\begin{array}{l l l l}
      n \equiv 1 \Mod{2}, & k \equiv 1 \Mod{3} & \Longrightarrow & k\cdot2^n+1\equiv0\Mod{3}\\
     n \equiv 2 \Mod{4}, & k \equiv 1 \Mod{5} & \Longrightarrow & k\cdot2^n+1\equiv0\Mod{5}\\
     n \equiv 4 \Mod{8}, & k \equiv 1 \Mod{17} & \Longrightarrow & k\cdot2^n+1\equiv0\Mod{17}\\
     n \equiv 8 \Mod{16}, & k \equiv 1 \Mod{257} & \Longrightarrow & k\cdot2^n+1\equiv0\Mod{257}\\
     n \equiv 16 \Mod{32}, & k \equiv 1 \Mod{65537} & \Longrightarrow & k\cdot2^n+1\equiv0\Mod{65537}\\
     n \equiv 32 \Mod{64}, & k \equiv 1 \Mod{641} & \Longrightarrow & k\cdot2^n+1\equiv0\Mod{641}\\
     n \equiv 0 \Mod{64}, & k \equiv -1
     \Mod{6700417} & \Longrightarrow & k\cdot2^n+1\equiv0\Mod{6700417}.
    \end{array}
\]
The congruences on $n$ on the left form a covering system.  The conditions on $k$ then force the implications to all hold.  From this, one can deduce then that if
\[
k\equiv 15511380746462593381 \Mod{3\cdot5\cdot...\cdot6700417},
\]
then $k \cdot 2^n + 1$ is divisible by one of the primes 
\[
3, \quad 5, \quad 17, \quad 257, \quad 65537, \quad 641, \quad  6700417
\]
for each positive integer $n$ and, hence, $k \cdot 2^n + 1$ is composite. 
An \textit{odd} positive integer $k$ having the property that $k \cdot 2^n + 1$ is composite for all positive integers $n$ is called a \textit{Sierpi\'nski number}. The condition that $k$ be odd was introduced at least in part to avoid the possibility of $k$ being a power of $2$ (see \cite{bckl} for more details). 
We note that the smallest known Sierpi\'nski number is $78557$, which was found by J.~Selfridge (unpublished).

The main purpose of this paper is to offer the following generalization of Sierpi\'nski's result.

\begin{theorem}\label{themainthm}
Fix $A\in\mathbb{Z}$ with $2\le A$. There exists an arithmetic progression of odd positive integers $k$ such that $k\cdot a^n + 1$ is composite for all $a\in[2,A]$ and all $n\in\mathbb{Z}^+$. 
\end{theorem}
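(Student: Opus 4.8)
The plan is to build, for each base $a\in[2,A]$, a single congruence condition on $k$ that forces $k\cdot a^n+1$ to be divisible by a fixed prime for every $n\in\mathbb{Z}^+$, and then to splice all of these conditions together with the Chinese Remainder Theorem. First I would dispose of the odd bases for free: if $a$ is odd and $k$ is odd, then $k\cdot a^n+1$ is even and exceeds $2$, hence composite, so no condition on $k$ is needed for odd $a$ beyond keeping $k$ odd. This reduces the problem to the even bases $a\in[2,A]$, for which $k\cdot a^n+1$ is odd and genuine covering primes are required.

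Next, fix an even base $a$ and let $\mathcal{C}=\{n\equiv r_i\pmod{m_i}\}_{i=1}^{t}$ be a covering system with distinct moduli. For each modulus $m_i$, Zsygmondy's (Bang's) theorem supplies a primitive prime divisor $p_{a,i}$ of $a^{m_i}-1$, that is, a prime with $\mathrm{ord}_{p_{a,i}}(a)=m_i$ (the finitely many exceptional pairs $(a,m)$ can be avoided by the choice of $\mathcal{C}$). Since $p_{a,i}\nmid a$, the class $n\equiv r_i\pmod{m_i}$ gives $a^n\equiv a^{r_i}\pmod{p_{a,i}}$, so imposing $k\equiv -a^{-r_i}\pmod{p_{a,i}}$ forces $k\cdot a^n+1\equiv 0\pmod{p_{a,i}}$ for every $n$ in that class. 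As $\mathcal{C}$ covers $\mathbb{Z}$, every $n$ lies in some class, so any $k$ meeting all $t$ congruences makes $k\cdot a^n+1$ divisible by one of the $p_{a,i}$; note each such prime is odd, since $a^{m_i}-1$ is odd.

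Carrying this out for every even base and invoking the Chinese Remainder Theorem collapses the finitely many congruences, together with $k\equiv 1\pmod 2$, into a single residue class modulo $N:=2\prod_{a,i}p_{a,i}$. Finally, for $k$ in this class large enough that $k>\max_{a,i}p_{a,i}$, each $k\cdot a^n+1$ has some $p_{a,i}$ as a proper divisor and so is composite, the odd bases being handled by parity; these $k$ form the desired arithmetic progression of odd positive integers.

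I expect the crux to be the legitimacy of the Chinese Remainder step: arranging that the primes $p_{a,i}$ chosen across all the finitely many bases are pairwise distinct, so that the individual congruences on $k$ do not conflict. Within a single base distinctness is automatic, since distinct moduli force distinct orders and hence distinct primes. Across bases one must exploit the freedom in the construction—the choice of covering system for each base, and the pool of admissible primes, which for a class of modulus $m_i$ may be taken to be \emph{any} prime divisor of $a^{m_i}-1$ rather than only the primitive one. Since only finitely many primes are being selected in total, a finite greedy assignment avoids all overlaps, with Zsygmondy guaranteeing that each pool is nonempty and, after mild enlargement of the moduli when necessary, large enough to dodge the finitely many primes already committed. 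Verifying this fresh-prime selection, and checking the handful of Zsygmondy exceptions, is the only real work; everything else is bookkeeping.
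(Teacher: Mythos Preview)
Your overall structure is right, and you have correctly located the crux: arranging that the congruences $k\equiv -a^{-r_i}\pmod{p_{a,i}}$ are simultaneously satisfiable as $a$ ranges over the even integers in $[2,A]$. But the resolution you sketch---``a finite greedy assignment'' with ``mild enlargement of the moduli when necessary''---is not a proof, and it is not clear it can be made into one without a further idea. Consider $a=2$ and $a=4$. The primes attached to power-of-two moduli for base $4$ are exactly those for base $2$ shifted by one step, since $\Phi_{2^j}(4)=\Phi_{2^{j+1}}(2)$; so the pools overlap almost completely. Concretely, once $3$ and $5$ are committed to $(a,m)=(2,2)$ and $(2,4)$, the pool $\{3,5\}$ for $(4,2)$ is exhausted. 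Refining that class to modulus $4$ yields pool $\{3,5,17\}$, giving one fresh prime $17$ but two classes to fill; refining the leftover class to modulus $8$ adds only $257$; and so on, each refinement producing one new Fermat-type prime while one class remains uncovered. When those very primes are simultaneously being consumed by the base-$2$ covering, the greedy process does not obviously terminate. ``Only finitely many primes are being selected'' is not by itself a reason the selection succeeds; this is the step where real content is needed, not bookkeeping.

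The paper dissolves the obstruction with a device your proposal does not contain. For each even $a$ it first uses the near-covering $\{n\equiv 2^{j-1}\pmod{2^j}:1\le j\le T\}$ with primes $p_j\mid a^{2^{j-1}}+1$, so that the induced condition on $k$ is $k\equiv -a^{-2^{j-1}}\equiv 1\pmod{p_j}$ in every case. Because every condition reads ``$k\equiv 1$'', these are automatically compatible across all bases regardless of prime collisions---the $a=2$ versus $a=4$ overlap above simply evaporates. Only the single residual class $n\equiv 0\pmod{2^T}$ then needs genuinely fresh primes, and for that the paper uses a short covering of odd prime length $q$ together with a pigeonhole count over the many pairs $(q,\ell)$ with $q\le\log T$ and $0\le\ell\le T$, exploiting that the largest prime factors of $\Phi_{2^\ell q}(a)$ are pairwise distinct. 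In short, the missing idea is the ``$1$-system'' trick that collapses most of the compatibility problem to a triviality, leaving a residue small enough for a counting argument to finish.
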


\noindent
Further related results are described in the last section of the paper.  

Since for a given positive integer $k$, Dirichlet’s theorem implies there are infinitely many positive integers $a$ such that  $k a + 1$  is prime, we know that there cannot be a $k$ such that $k\cdot a^n + 1$ is composite for all $a\in \mathbb Z^+$ and all $n\in\mathbb{Z}^+$.  Thus, we cannot replace $a\in[2,A]$ in Theorem~\ref{themainthm} by $a\in \mathbb Z^+$.

For the proof of Theorem~\ref{themainthm}, we will show that there is an arithmetic progression of odd positive integers $k$ such that \textit{every sufficiently large} $k$ in the arithmetic progression satisfies $k\cdot a^n + 1$ is composite for all $a\in[2,A]$ and all $n\in\mathbb{Z}^+$. This will be sufficient as one merely needs consider a sub-arithmetic progression of such an arithmetic progression to obtain the theorem as stated.

Before proceeding, we comment that the work of A.~Brunner, C.~Caldwell, D.~Krywaruczenko, and C.~Lownsdale \cite{bckl} discusses the notion of $a$-Sierpi\'nski numbers which is different from the numbers $k$ considered in Theorem~\ref{themainthm}.  More relevant to this paper is their nice observation that if one removes the choice of $a = 2$ in Theorem~\ref{themainthm}, the result is easily established.  More precisely, 
if one takes $P$ to be the product of the primes $\le A$ and $k \equiv -1 \pmod{P}$, then for each integer $a \in [3,A]$ and $p$ a prime divisor of $a-1$, we have
$k \cdot a^n + 1 \equiv 0 \pmod{p}$.  Therefore, every sufficiently large 
$k \equiv -1 \pmod{P}$ has the property that $k \cdot a^n + 1$ is composite.  Capturing however the case $a = 2$ in 
Theorem~\ref{themainthm} as well seems considerably more difficult, which is what we are addressing in this paper.

We turn to terminology regarding congruence systems. Let 
\[
\mathcal{S}=\{x\equiv r_j \Mod{m_j} : 1 \le j \le s \}
\]
denote a finite congruence system on $x$. If $t$ is a member of any of the congruence classes in $\mathcal{S}$, we say $\mathcal{S}$ \textit{covers} $t$ (equivalently, $t$ is covered by $\mathcal{S}$). As is typical, if $t$ is a member of all of the congruence classes in $\mathcal{S}$, we say $\mathcal{S}$ is \textit{satisfied} by $t$ (equivalently, $t$ satisfies $\mathcal{S}$). 
Thus, we have
\begin{align*}
t \text{ is covered by }\mathcal{S} &\iff t \equiv r_j \Mod{m_j} \text{ for some } j \in \{ 1, 2, \ldots, s\},\\
\intertext{and}
t \text{ satisfies }\mathcal{S} &\iff t \equiv r_j \Mod{m_j} \text{ for all } j \in \{ 1, 2, \ldots, s\}.
\end{align*}
These definitions extend naturally to subsets of the integers.  Thus, if $T \subseteq \mathbb Z$, then $T$ is covered by a congruence system $\mathcal S$ if each element of $T$ is covered by $\mathcal S$ and $T$ satisfies $\mathcal S$ if each element of $T$ satisfies $\mathcal S$. Lastly, suppose $\mathcal{S}_1$ and $\mathcal{S}_2$ are two congruence systems. We will say $\mathcal{S}_1$ and $\mathcal{S}_2$ are \textit{compatible} if the set of integers that satisfy both $\mathcal{S}_1$ and $\mathcal{S}_2$ is nonempty. In other words, there exist integers $k$ that satisfy the system $\mathcal{S}_1\cup \mathcal{S}_2$. 

Cyclotomic polynomials also play an important role in our work. Recall the $m^{th}$ cyclotomic polynomial in $x$, denoted $\Phi_m(x)$, is the unique irreducible polynomial with integer coefficients that divides $x^m-1$ and does not divide $x^j-1$ for any $1\le j < m$. For any integer $p$, note the implication
\[p\mid\Phi_m(a)\implies
     p\:\mid(a^m-1)\implies a^m\equiv 1\Mod{p}.\]

\noindent This observation illuminates a connection between producing a Sierpi\'nski number (or a Sierpi\'nski-like number, where $a$ may take on a different value than 2) and the prime factors of $\Phi_m(a)$. Explicitly, given $n\equiv r\pmod{m}$ and $p\:\mid\Phi_m(a)$, we can be assured $k\cdot a^n+1$ will be divisible by $p$ by setting $k\equiv -a^{-r}\pmod{p}$. We will also utilize other facts about cyclotomic polynomials to help achieve our result.

\section{Preliminaries}
We give one definition and establish five lemmas before addressing the main result. The definition is primarily for ease of writing.

\begin{definition}
Let $\mathcal{K}=\{k\equiv r_j\pmod{m_j}\}$ be a congruence system on $k$. If $r_j\equiv 1\pmod{m_j}$ for each $j$, then we call $\mathcal{K}$ a $\textbf{1-system}$ (on $k$).
\end{definition}

\noindent
In other words, a 1-system is a congruence system where each congruence class may be represented with a common residue of 1. As an example, we have
\[\begin{array}{l}
k \equiv 1\Mod{2}\\
k\equiv 1\Mod{3}\\
k\equiv 1\Mod{4}.
\end{array}\]
By applying the Chinese Reminder Theorem, we see that the integers $k$ that satisfy a 1-system are the same as the integers $k$ that satisfy $k \equiv 1 \pmod{L}$ where $L$ is the least common multiple of the moduli in the 1-system.

\begin{lemma}\label{1-syslem} Fix $T\in\mathbb{Z}^+$. Let $a$ be an integer greater than or equal to 2.
Then there exists a $1$-system of $T$ congruences with prime moduli $p_1,...,p_T$ such that, for every $k$ satisfying this $1$-system, and every $n\in\mathbb{Z}^{+}$ where $n\not\equiv 0\pmod{2^T}$,
the expression $k\cdot a^n+1$ is divisible by one of $p_1,...,p_T$.
\end{lemma}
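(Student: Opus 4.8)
The plan is to build the $1$-system from the dyadic tower of congruence classes on $n$ together with the factorizations of the values $a^{2^j}+1=\Phi_{2^{j+1}}(a)$. First I would record the covering of $n$: for $j=0,1,\dots,T-1$, the class $n\equiv 2^{j}\pmod{2^{j+1}}$ collects exactly those $n$ with $v_2(n)=j$ (write $n=2^{j}u$ with $u$ odd and reduce mod $2^{j+1}$), so the union of these $T$ classes is precisely $\{\,n:2^{T}\nmid n\,\}$. Thus it suffices to attach to the $j$-th class a prime $p_{j+1}$ that forces $p_{j+1}\mid k\cdot a^{n}+1$ whenever $n\equiv 2^{j}\pmod{2^{j+1}}$ and $k\equiv 1\pmod{p_{j+1}}$.

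For the $j$-th class I would choose $p_{j+1}$ to be a prime divisor of $a^{2^{j}}+1$, taking an \emph{odd} such prime whenever one exists. The mechanism is the one highlighted just before this section: if $p_{j+1}$ is odd and divides $a^{2^{j}}+1$, then $a^{2^{j}}\equiv-1$ and hence $a^{2^{j+1}}\equiv1\pmod{p_{j+1}}$, so for $n=2^{j}+2^{j+1}t$ we get $a^{n}\equiv a^{2^{j}}\equiv-1$, giving $k\cdot a^{n}+1\equiv -1+1\equiv 0\pmod{p_{j+1}}$ under $k\equiv1\pmod{p_{j+1}}$. Since $a^{2^{j}}+1\ge 3$ always, such a prime exists unless $a^{2^{j}}+1$ is a power of $2$; an elementary check shows this can occur only when $a$ is odd, $j=0$, and $a+1$ is a power of $2$ (for $j\ge1$ and $a$ odd one has $a^{2^{j}}+1\equiv2\pmod 4$ with $a^{2^{j}}+1\ge 10$, while for even $a$ the value is odd). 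In that single exceptional case I would take $p_1=2$: as $a$ is odd and every $k$ satisfying a $1$-system containing $k\equiv1\pmod 2$ is odd, the quantity $k\cdot a^{n}+1$ is even, so $2\mid k\cdot a^{n}+1$ for all $n$ and in particular on the $j=0$ class.

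It remains to see that the $p_{j+1}$ may be taken distinct, so that we genuinely have $T$ congruences. Here I would note that an odd prime $p$ dividing $a^{2^{j}}+1$ satisfies $\operatorname{ord}_p(a)=2^{j+1}$ (from $a^{2^{j}}\equiv-1$ we get $a^{2^{j+1}}\equiv1$ but $a^{2^{j}}\not\equiv1$), a quantity that determines $j$; hence the odd prime divisors of $a^{2^{j}}+1$ and $a^{2^{j'}}+1$ are disjoint for $j\ne j'$, and the prime $2$ is invoked for at most one index (namely $j=0$). Choosing one prime per class as above therefore yields distinct moduli $p_1,\dots,p_T$, and setting $k\equiv1\pmod{p_{j+1}}$ for each $j$ gives the desired $1$-system; the computation above shows every $n$ with $2^{T}\nmid n$ lands in some class $j$ and has $k\cdot a^{n}+1$ divisible by the corresponding $p_{j+1}$.

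I expect the only real obstacle to be guaranteeing a usable prime for each class: the dyadic tower works verbatim as long as each $a^{2^{j}}+1$ contributes an odd prime, and the sole way this fails---$a$ odd with $a+1$ a power of $2$---is exactly the situation rescued by the prime $2$, using that $k\cdot a^{n}+1$ is then forced even. One could alternatively obtain the primitive odd prime divisors from Zsygmondy's theorem, whose only relevant exception is this same case $a+1=2^{s}$, but the parity argument keeps the proof self-contained.
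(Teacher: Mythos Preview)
Your proof is correct and follows the same approach as the paper: the dyadic tower of classes $n\equiv 2^{j}\pmod{2^{j+1}}$ for $0\le j\le T-1$, together with a prime divisor $p_{j+1}$ of $a^{2^{j}}+1$ and the congruence $k\equiv 1\pmod{p_{j+1}}$, is exactly the construction the paper gives. The only difference is that you work to guarantee the $p_{j+1}$ are distinct and treat $p=2$ as a special case, whereas the paper simply remarks that the $p_j$ \emph{need not be distinct} (repeated congruences in a $1$-system are harmless) and does not separate out $p=2$, since the computation $k\cdot a^{n}+1\equiv -k+1\pmod{p_j}$ is valid for $p_j=2$ as well; your extra care is sound but unnecessary for the lemma as stated.
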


\begin{proof}
Let $p_j$ be a prime dividing $a^{2^{j-1}}+1$ where $1\le j \le T$. Here, the $p_j$ need not be distinct.  For each $j$, we have $a^{2^{j-1}}\equiv -1 \pmod{p_j}$ and $a^{2^{j}}\equiv 1\pmod{p_j}$.  For $n\equiv 2^{j-1}\pmod{2^j}$, we can write $n = 2^jt+2^{j-1}$ for some integer $t$.
Then we obtain
\[k\cdot a^{n}+1= k\cdot a^{2^jt+2^{j-1}}+1\equiv k(1)(-1)+1\equiv -k+1\Mod{p_j}. \]
Taking $k\equiv 1\pmod{p_j}$, we see that the expression $k\cdot a^n+1$ is divisible by $p_j$. Thus, the implications
    \[
    \begin{array}{l l l l} 
      n \equiv 1 \Mod{2}, & k \equiv 1 \Mod{p_1} & \Longrightarrow & k\cdot a^n+1\equiv0\Mod{p_1}\\
     n \equiv 2 \Mod{4}, & k \equiv 1 \Mod{p_2} & \Longrightarrow & k\cdot a^n+1\equiv0\Mod{p_2}\\
     n \equiv 4 \Mod{8}, & k \equiv 1 \Mod{p_3} & \Longrightarrow & k\cdot a^n+1\equiv0\Mod{p_3}\\
     \:\:\:\:\:\vdots & \:\:\:\:\:\vdots & &\:\:\:\:\:\:\:\:\:\:\vdots\\
     n \equiv 2^{T-1} \Mod{2^T}, & k \equiv 1 \Mod{p_{T}} & \Longrightarrow & k\cdot a ^n+1\equiv0\Mod{p_{T}}\\
    \end{array}
    \]
\noindent
all hold. Observe that the least common multiple of the moduli in the congruences on $n$ above is $2^T$. Further, every integer in $[1,2^T-1]$ is of the form $2^{j-1}t$ for some $j\in\{1,...,T\}$ and odd integer $t$. Note $2^{j-1}t\equiv 2^{j-1}\pmod{2^j}$, so each integer $1,...,2^{T}-1$ is satisfies one of the congruences on $n$ above.  Thus, these congruences on $n$ cover every integer except those that are $0$ modulo $2^T$. For $k$ satisfying the $1$-system above with moduli $p_1,\ldots,p_T$ and $n\in\mathbb{Z}^{+}$ satisfying $n\not\equiv0\pmod{2^T}$, we see that $k\cdot a^n+1$ will be divisible by some prime among $p_1,...,p_T$.
\end{proof}

\begin{lemma}\label{qpatchlem} Fix $T\in\mathbb{Z}$ with $T\ge2$. Let $q$ be an odd, positive integer such that $q\le T+1$. Let 
\[
L = \{\ell_1, \ldots, \ell_q \} \subseteq [0,T] \cap \mathbb{Z}.
\]
The congruence class $n\equiv 0\pmod{2^{T}}$ is covered by the congruence system 
\[
\mathcal{C}_0=\{n \equiv 2^T j \Mod{2^{\ell_j}q}\mid 1 \le j \le q \}.
\]
\end{lemma}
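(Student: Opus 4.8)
The plan is to verify the covering directly. I would take an arbitrary integer $n$ with $n \equiv 0 \pmod{2^T}$, write $n = 2^T m$ for some $m \in \mathbb{Z}$, and exhibit an index $j$ for which $n$ lies in the class $n \equiv 2^T j \pmod{2^{\ell_j} q}$. Since each modulus $2^{\ell_j} q$ is a product of a power of $2$ and the odd number $q$, we have $\gcd(2^{\ell_j}, q) = 1$, so the Chinese Remainder Theorem lets me split the single congruence $n \equiv 2^T j \pmod{2^{\ell_j} q}$ into the pair of simultaneous congruences $n \equiv 2^T j \pmod{2^{\ell_j}}$ and $n \equiv 2^T j \pmod{q}$.

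First I would dispose of the power-of-two component, which I expect to hold automatically for every $j$. Because $\ell_j \in [0,T]$, we have $2^{\ell_j} \mid 2^T$, so $2^{\ell_j}$ divides both $n = 2^T m$ and $2^T j$; hence $n \equiv 0 \equiv 2^T j \pmod{2^{\ell_j}}$ regardless of the value of $j$. Thus the only genuine constraint is the congruence modulo $q$.

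It then remains to choose $j$ so that $2^T m \equiv 2^T j \pmod{q}$. Since $q$ is odd, $2^T$ is a unit modulo $q$, so this is equivalent to $m \equiv j \pmod{q}$. As $j$ runs over $\{1, 2, \ldots, q\}$, the residues $j \bmod q$ run over a complete residue system modulo $q$ (namely $1, 2, \ldots, q-1, 0$), so there is exactly one index $j$ with $m \equiv j \pmod{q}$, and for that $j$ the integer $n$ is covered. Since $n$ was an arbitrary multiple of $2^T$, this shows $\mathcal{C}_0$ covers the class $n \equiv 0 \pmod{2^T}$.

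I do not anticipate a real obstacle here: the statement is a clean consequence of the Chinese Remainder Theorem together with the fact that $\{1, \ldots, q\}$ is a complete residue system modulo the odd number $q$. The hypotheses $T \ge 2$ and $q \le T+1$ play no role in the covering argument itself; rather, $q \le T+1$ is precisely what guarantees that a $q$-element subset $L$ of the $(T+1)$-element set $[0,T] \cap \mathbb{Z}$ can be chosen in the first place. I would note this explicitly so the reader understands why those constraints appear in the hypotheses.
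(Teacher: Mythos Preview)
Your argument is correct and is essentially the same as the paper's proof: both split the congruence modulo $2^{\ell_j}q$ via the Chinese Remainder Theorem, observe that the $2^{\ell_j}$-component is automatic since $\ell_j\le T$, and use that $\{2^T j:1\le j\le q\}$ (equivalently $\{j:1\le j\le q\}$, since $q$ is odd) is a complete residue system modulo $q$. Your version is simply more explicit, and your remark that $T\ge 2$ and $q\le T+1$ serve only to make the choice of $L$ possible is accurate.
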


\begin{proof}
The numbers $2^T j$ for $1 \le j \le q$ run through a complete residue system modulo $q$.  The result follows on noting that $n \equiv 2^T j \pmod{2^{\ell_j}q}$ covers the integers that are $0$ modulo $2^T$ and $2^T j$ modulo $q$.
\end{proof}

The idea of using Lemma~\ref{1-syslem} and Lemma~\ref{qpatchlem}  originates from \cite{FFK}.

\begin{lemma}\label{lemfil}
Let $n$ and $m$ be positive integers such that $n>m$. If $n/m$ is not a power of a prime, then there exists polynomials $u(x),v(x)\in\mathbb{Z}[x]$ satisfying 
\[\Phi_n(x)u(x)+\Phi_m(x)v(x)=1.\]
If for some prime $p$ the quotient $n/m$ is a power of $p$, then there exists polynomials $u(x),v(x)\in\mathbb{Z}[x]$ satisfying 
\[\Phi_n(x)u(x)+\Phi_m(x)v(x)=p.\]
\end{lemma}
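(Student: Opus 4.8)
The plan is to study the ideal $I=(\Phi_n(x),\Phi_m(x))$ in $\mathbb{Z}[x]$ and to show that the least positive integer it contains is exactly $1$ or $p$ in the two respective cases. Since $\Phi_n$ and $\Phi_m$ are distinct irreducible polynomials over $\mathbb{Q}$, they are coprime in $\mathbb{Q}[x]$, so $I$ meets $\mathbb{Z}$ in a nonzero ideal $d\mathbb{Z}$ with $d\ge 1$; moreover, an integer $N$ can be written as $\Phi_n u+\Phi_m v$ with $u,v\in\mathbb{Z}[x]$ precisely when $d\mid N$. Thus the whole lemma is equivalent to the two assertions $d=1$ and $d=p$, and I would prove these by pinning down which primes divide $d$ and with what multiplicity. (Throughout, $\phi$ denotes the Euler totient, so $\deg\Phi_m=\phi(m)$.)

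First I would determine the primes dividing $d$. For a prime $p$, reduction modulo $p$ shows that $p\mid d$ if and only if $\overline{\Phi_n}$ and $\overline{\Phi_m}$ have a common factor in $\mathbb{F}_p[x]$: if they share an irreducible factor $h$, then $I$ lies in the maximal ideal $(p,h)$, forcing $d\in p\mathbb{Z}$; conversely, if $\gcd(\overline{\Phi_n},\overline{\Phi_m})=1$ one lifts a B\'ezout relation to exhibit $1\in I+(p)$, which makes $p$ a unit in $\mathbb{Z}[x]/I$ and rules out $p\mid d$. To decide when such a common factor exists I would invoke the standard congruence $\Phi_N(x)\equiv \Phi_{N_0}(x)^{\phi(p^{v})}\pmod p$, where $N=p^{v}N_0$ with $p\nmid N_0$; because $\Phi_{n_0}$ and $\Phi_{m_0}$ have disjoint, separable root sets in $\overline{\mathbb{F}_p}$ whenever $n_0\ne m_0$, the reductions share a factor exactly when $n$ and $m$ have equal prime-to-$p$ parts, i.e. exactly when $n/m$ is a power of $p$.

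This already settles the first case: if $n/m$ is not a prime power, no prime divides $d$, so $d=1$ and we may solve $\Phi_n u+\Phi_m v=1$. In the prime-power case $n/m=p^{c}$ the argument shows $p$ is the only prime dividing $d$, so $d=p^{e}$ for some $e\ge 1$, and the real content is to prove $e=1$. Here I would compare two counts of the finite abelian $p$-group $R=\mathbb{Z}[x]/I$. On one hand $|R|=\lvert\mathrm{Res}(\Phi_n,\Phi_m)\rvert=p^{\phi(m)}$, the classical value of the resultant of two cyclotomic polynomials in the prime-power case (both $\Phi_n,\Phi_m$ being monic makes the cardinality-resultant identity applicable). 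On the other hand $R/pR\cong\mathbb{F}_p[x]/\bigl(\gcd(\overline{\Phi_n},\overline{\Phi_m})\bigr)$, and the congruence above identifies the gcd as $\Phi_{m_0}^{\phi(p^{v_p(m)})}$, of degree $\phi(p^{v_p(m)})\,\phi(m_0)=\phi(m)$, so $|R/pR|=p^{\phi(m)}$ as well. Since $|R|=|R/pR|$ for the finite abelian $p$-group $R$, the natural surjection $R\to R/pR$ is an isomorphism, whence $pR=0$; that is, $p\in I$ and $d=p$, as desired.

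The main obstacle is precisely this last step: upgrading ``$p\mid d$'' to ``$d=p$'' in the prime-power case. The facts that $\Phi_n$ and $\Phi_m$ become coprime after inverting $p$ and merely share a factor modulo $p$ only tell us $d$ is a power of $p$; excluding higher powers is genuinely arithmetic, and this is where I rely on the exact value $p^{\phi(m)}$ of the cyclotomic resultant together with the equality $|R|=|R/pR|$. If one prefers to avoid quoting the resultant, the same conclusion can be reached by showing in $\mathbb{Z}[\zeta_m]=\mathbb{Z}[x]/(\Phi_m)$ that $\Phi_n(\zeta_m)$ generates the ideal $(p)$, though the ramified situation $p\mid m$ makes that route more delicate and is why I favor the uniform counting argument above.
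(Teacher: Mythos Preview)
The paper does not prove this lemma at all: it explicitly omits the proof and refers the reader to \cite{fillemma} and \cite{dresden}. So there is no in-paper argument to compare your proposal against.

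That said, your outline is sound. The identification $I\cap\mathbb{Z}=d\mathbb{Z}$, the criterion ``$p\mid d$ iff $\overline{\Phi_n}$ and $\overline{\Phi_m}$ share a factor in $\mathbb{F}_p[x]$'', and the reduction formula $\Phi_{p^vN_0}(x)\equiv\Phi_{N_0}(x)^{\phi(p^v)}\pmod p$ together correctly isolate the unique prime that can divide $d$. Your counting argument for $e=1$ is also correct: with $\Phi_n,\Phi_m$ monic one has $|R|=|\mathrm{Res}(\Phi_n,\Phi_m)|$, and your computation of $\deg\gcd(\overline{\Phi_n},\overline{\Phi_m})=\phi(m)$ gives $|R/pR|=p^{\phi(m)}$, forcing $pR=0$.

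One caveat worth flagging: you are invoking Apostol's formula $\mathrm{Res}(\Phi_n,\Phi_m)=p^{\phi(m)}$ as a known input, and that formula is essentially what \cite{dresden} (one of the paper's two references for this lemma) is about. So your route is consistent with the paper's citations, but it does lean on a result of comparable depth to the lemma itself; if a self-contained argument is wanted, the alternative you mention---showing $\Phi_n(\zeta_m)$ generates the ideal $(p)$ in $\mathbb{Z}[\zeta_m]$---or a direct computation of $\prod_{\zeta}\Phi_n(\zeta)$ over primitive $m$th roots would close that loop.
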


We omit the proof of Lemma~\ref{lemfil}. One can find the statement and proof of Lemma~\ref{lemfil} above in \cite{fillemma}; also, see \cite{dresden}. Relative to our work, the primary purpose of Lemma~\ref{lemfil} is to establish our next lemma. 

Our next lemma involves the odd prime factors of $\Phi_N(a)$ where $N\ge3$ and $a\ge2$ with $a$ even. We show such an odd prime factor exists for each choice of $N$ and $a$ before stating the lemma. Given $N\ge 3$ and $a\ge 2$ with $a$ even, the value of $\Phi_N(a)$ is odd. To guarantee $\Phi_N(a)$ has an odd prime factor, it suffices to show $|\Phi_N(a)|>1$. Recall 
\[
\Phi_N(a)=\prod_{\substack{1\le j \le N\\ \gcd(j,N)=1}}(a-\zeta_N^j)
\]
where $\zeta_N=e^{2\pi i/N}$. Note the $\zeta_N^j$ resides on the unit circle in the complex plane, and $\zeta_N^j \ne 1$ for $N\ge3$ and $\gcd(j,N)=1$. We deduce that $|a-\zeta_j| > 1$ for each $j$ in the product above since $a\ge2$. Hence, $|\Phi_N(a)| > 1$, and $\Phi_N(a)$ has an odd prime factor.

\begin{lemma}\label{distinctplem}
Fix an even, positive integer $a$. The largest, necessarily odd, prime divisor of $\Phi_{2^jq}(a)$ as $j$ varies among the nonnegative integers and $q$ varies over the odd primes are distinct.
\end{lemma}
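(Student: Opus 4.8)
The plan is to prove the stronger statement that for any two distinct pairs $(j_1,q_1)$ and $(j_2,q_2)$ the integers $\Phi_{2^{j_1}q_1}(a)$ and $\Phi_{2^{j_2}q_2}(a)$ are in fact \emph{coprime}; the distinctness of their largest (odd) prime divisors then follows at once. Note first that each such largest prime divisor exists and is odd: by the computation preceding the lemma, $|\Phi_N(a)|>1$ and $\Phi_N(a)$ is odd whenever $N\ge 3$ and $a$ is even, and here $N=2^jq\ge 3$. The main tool will be Lemma~\ref{lemfil}, which controls the common factors of $\Phi_n(a)$ and $\Phi_m(a)$ entirely through the ratio $n/m$: if $n/m$ is not a prime power then $\gcd(\Phi_n(a),\Phi_m(a))=1$, while if $n/m=p^k$ then this gcd divides $p$.

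Since distinct pairs $(j,q)$ yield distinct indices $2^jq$ (the odd part $q$ and the exponent $j$ are both recoverable from $2^jq$), I would fix $(j_1,q_1)\ne(j_2,q_2)$, write $n=2^{j_1}q_1$ and $m=2^{j_2}q_2$ with $n>m$, and determine when $n/m$ can be a prime power. The crux is that, for indices of this special shape, $n/m$ is a prime power only when it is a power of $2$. Indeed, $m\mid n$ means $2^{j_2}q_2\mid 2^{j_1}q_1$, which forces $q_2\mid q_1$ and hence $q_1=q_2$ since both are primes; so unless $q_1=q_2$ the ratio $n/m$ is not even an integer, and Lemma~\ref{lemfil} already gives $\gcd(\Phi_n(a),\Phi_m(a))=1$. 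When $q_1=q_2$ (and therefore $j_1\ne j_2$), the ratio $n/m$ equals $2^{|j_1-j_2|}$, a power of the prime $2$, so Lemma~\ref{lemfil} only guarantees $\gcd(\Phi_n(a),\Phi_m(a))\mid 2$. Here I would invoke the parity fact recorded just before the lemma: since $a$ is even and $n,m\ge 3$, both $\Phi_n(a)$ and $\Phi_m(a)$ are odd, forcing the gcd to be $1$ in this case as well.

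In every case the two values share no prime factor, so in particular their largest prime divisors are distinct, which is the claim. The one delicate point—and the place where I expect the real content to sit—is the case $q_1=q_2$: Lemma~\ref{lemfil} by itself leaves open a common factor of $2$, and it is precisely the oddness of $\Phi_N(a)$ for even $a$ that removes it. I would flag this parity step as the sole place where the hypothesis that $a$ is even is genuinely used, the remaining cases being immediate consequences of Lemma~\ref{lemfil}.
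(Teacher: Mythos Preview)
Your proposal is correct and follows essentially the same route as the paper: both arguments split into the cases $q_1\ne q_2$ and $q_1=q_2$, apply Lemma~\ref{lemfil} to bound $\gcd(\Phi_n(a),\Phi_m(a))$ by $1$ or $2$ respectively, and then use oddness in the second case. The only cosmetic difference is that you extract the slightly stronger conclusion $\gcd=1$ (using that both $\Phi_n(a)$ and $\Phi_m(a)$ are odd), whereas the paper is content to note that the largest prime divisors, being odd, cannot coincide once the gcd is at most $2$.
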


\begin{proof}
Let $j,\,j'$ be nonnegative integers and $q,\,q'$ be odd primes. Let $N=2^jq$ and $N'=2^{j'}q'$. Observe that $N=N'$ if and only if $j=j'$ and $q=q'$. Recall $\Phi_{N}(a)$ and $\Phi_{N'}(a)$ are odd, and neither is equal to $\pm1$. Let $p$ and $p'$ be the largest prime divisors of $\Phi_{N}(a)$ and $\Phi_{N'}(a)$ respectively. It suffices to show $p\neq p'$ when $j\neq j'$ or $q\neq q'$.  

Suppose $q\neq q'$. Without loss of generality, let $N\ge N'$. Then $N/N'$ will not be a power of a prime, and, by Lemma~\ref{lemfil}, there exists $u(x),v(x)\in\mathbb{Z}[x]$ such that
\[\Phi_{N}(x)\,u(x)+\Phi_{N'}(x)\,v(x)=1.\]
By evaluating the above expression at $x=a$, we can see $\Phi_{N}(a)$ and $\Phi_{N'}(a)$ are relatively prime. Thus $p\neq p'$. 

Now suppose $q = q'$ and $j\neq j'$. Then $N/N'$ will be a power of 2, and, by Lemma~\ref{lemfil}, there exists $u(x),v(x)\in\mathbb{Z}[x]$ such that
\[\Phi_{N}(x)\,u(x)+\Phi_{N'}(x)\,v(x)=2.\]
By evaluating the above expression at $x=a$, we can see the greatest common divisor of $\Phi_{N}(a)$ and $\Phi_{N'}(a)$ is at most 2. Recall $p$ and $p'$ are necessarily odd. Since $\Phi_{N}(a)$ and $\Phi_{N'}(a)$ may have no common factors greater than 2, we deduce $p\neq p'$.
\end{proof}

Through a modified argument, one may show the conclusion of Lemma~\ref{distinctplem} is true for both even and odd values of $a$. However, for our work, we need only apply the result when $a$ is even.

\section{Proof of Theorem~\ref{themainthm}}

We find a finite set of primes $\mathcal M$ and an arithmetic progression of $k$'s satisfying the following property. For each such $k$, each $a\in[2,A]$ and each $n\in\mathbb{Z}^{+}$, there is a prime in $\mathcal M$ that divides $k\cdot a^n+1$.
To do this, we impose  a congruence system on $k$ that ensures $k\cdot a^n+1$ will be divisible by some prime in $\mathcal M$ for each $n\in\mathbb{Z}^{+}$. The simplest case is when $a$ is odd. If $a$ is odd, then we take $2$ to be in $\mathcal M$ and $k\equiv 1\pmod{2}$ ensuring $k\cdot a^n+1\equiv0\pmod{2}$ for every $n \in \mathbb Z^{+}$.  We also then have that $k$ is odd.

Now suppose $a$ is even.  Fix $T \in \mathbb Z^{+}$.
For each even $a$, we will impose a congruence system $\mathcal{K}_a$ on $k$ to ensure $k\cdot a^n+1$ is divisible by some odd prime in $\mathcal M$ for each $n\not\equiv 0\pmod{2^T}$, and then a second congruence system $\mathcal{L}_a$ on $k$ to ensure $k\cdot a^n+1$ is divisible by some odd prime in $\mathcal M$ for each of the remaining $n\equiv 0 \pmod{2^T}$. Thus, any $k$ satisfying both $\mathcal{K}_a$ and $\mathcal{L}_a$ will ensure $k\cdot a^n+1$ is divisible by some prime in $\mathcal{M}$ for each $n\in\mathbb{Z}^{+}$. To guarantee the desired $k$ exists, the systems $\mathcal{K}_a$ and $\mathcal{L}_a$ must be compatible as $a$ varies among the even integers in $[2,A]$. We begin with the $\mathcal{K}_a$ congruence systems.

We take $T$ to be large enough so that $A<\log\log{T}$. For each even $a$ in $[2,A]$, 
 by Lemma~\ref{1-syslem}, there exist a set $\mathcal{P}_a$ of $T$ primes and a 1-system $\mathcal{K}_a$ of $T$ congruences on $k$ such that $k\cdot a^n+1$ is divisible by a prime in $\mathcal{P}_a$ for each $k$ satisfying $\mathcal{K}_a$ and $n\not\equiv 0\pmod{2^T}$.  Since $a$ is even, we take the primes in $\mathcal{P}_a$ to be odd.  In line with our approach outlined at the outset, we set 
 \[
 \mathcal P = \bigcup_{\substack{a \in [2,A]\\a \text{ even}}} \mathcal P_a
 \]
 and $\mathcal{P}\subseteq\mathcal{\mathcal{M}}$. Since all of the $\mathcal{K}_a$ are 1-systems, we are assured they are compatible as $a$ varies. In other words, there exists a solution to the congruence system
\[\mathcal{K}=\bigcup_{\substack{a\in[2,A]\\ a\text{  even}}} \mathcal{K}_a\]
on $k$. For $k$ satisfying $\mathcal{K}$, the expression $k\cdot a^n+1$ will be divisible by some prime in $\mathcal{P}\subseteq\mathcal{M}$, dependent on $a$ and $n$, for all even $a$ in $[2,A]$ and $n\not\equiv 0 \pmod{2^T}$. We make an observation about $\mathcal{K}$ to be used later.
Since $\mathcal{K}$ is the union of $\lfloor A/2\rfloor$ systems of $T$ congruences each, we have
\[
|\mathcal P| < T \cdot A<T\log\log T.
\]

Now we construct the $\mathcal{L}_a$ systems to address the $n\equiv 0\pmod{2^T}$.
Set $Q=\log T$. For each even $a\in[2,A]$, let $q = q(a)$ be an odd prime (not necessarily different for different $a$) less than or equal to $Q$. 
For each such $q$, we consider $q$ distinct integers $\ell_1, \ldots, \ell_q$ in $[0,T]$, where the $\ell_j$ need not differ as $q$ varies.  
By Lemma~\ref{distinctplem}, for each $j\in\{1,...,q\}$, there exists an odd prime $p_j'$ that divides $\Phi_{2^{\ell_j}q}(a)$, where the $p_j'$ are different for different choices of the pair $(\ell_j,q)$. 
Importantly, 
\[
a^{2^{\ell_j}q}\equiv 1\Mod{p'_j}
\] 
for each $j$. It follows there exists a set $\mathcal{P}_a'$ of $q$ distinct primes $p_1',...,p_{q}'$ such that the implications
    \[\begin{array}{l l l l} 
    n \equiv 2^T\cdot 1 \Mod{2^{\ell_1}q}, & k \equiv c_1 \Mod{p'_1} & \Longrightarrow & k\cdot a^n+1\equiv0\Mod{p'_1}\\
    n \equiv 2^T\cdot 2 \Mod{2^{{\ell_2}}q}, & k \equiv c_2 \Mod{p'_2} & \Longrightarrow & k\cdot a^n+1\equiv0\Mod{p'_2}\\
    n \equiv 2^T\cdot 3 \Mod{2^{{\ell_3}}q}, & k \equiv c_3 \Mod{p'_3} & \Longrightarrow & k\cdot a^n+1\equiv0\Mod{p'_3}\\
     \:\:\:\:\:\vdots & \:\:\:\:\:\vdots & &\:\:\:\:\:\:\:\:\:\:\vdots\\
    n \equiv 2^Tq \Mod{2^{\ell_q}q}, & k \equiv c_{q} \Mod{p'_{q}} & \Longrightarrow & k\cdot a ^n+1\equiv0\Mod{p'_{q}}\\
    \end{array}\]
all hold, provided each $c_j\equiv -a^{-{2^T}j}\pmod{p'_j}$. In line with our approach, denote the system on $k$ above as $\mathcal{L}_a$. By Lemma~\ref{qpatchlem}, the above congruence system on $n$ covers the integers that satisfy $n\equiv 0\pmod{2^T}$. Therefore, for every $k$ satisfying $\mathcal{L}_a$, the expression $k\cdot a^n+1$ will be divisible by some prime in $\mathcal{P}_a'$ whenever $n\equiv 0\pmod{2^T}$. We can be assured such $k$ exists for a fixed $q$, as the moduli of $\mathcal{L}_a$ are all distinct primes. To complete the proof, we need only construct such $\mathcal{L}_a$ that are compatible as $a$ varies, and also compatible with $\mathcal{K}$.

In constructing the $\mathcal{L}_a$ systems, let's consider the even $a\in[2,A]$ one at a time in an increasing fashion up to some even $b\le A$. For even $b\in[4,A]$, let
\[
\mathcal{Q}_b=\bigcup_{\substack{a \in [2,b-2]\\a \text{ even}}} \mathcal P_a'.
\]
Note $\mathcal{Q}_b$ is the set of prime moduli used in the systems $\mathcal L_2,...,\mathcal L_{b-2}$. In line with this description, set $\mathcal{Q}_2=\emptyset$. Recall each of these $\mathcal{L}_a$ systems requires $q(a)\le Q$ primes, and there are certainly less than $A$ of these systems. Thus, for any appropriate value of $b$, we have
\[|\mathcal{Q}_b|<Q\cdot A<\log T \log\log T\] by our choices of $Q$ and $T$. It follows that
\begin{equation}\label{pqabd}
|\mathcal{P}\cup\mathcal{Q}_b|<T\log \log T + \log T \log \log T < 2 T \log \log T. 
\end{equation}

Observe that we have the implication 
\[
\mathcal{Q}_a\cap \mathcal{P}_a'=\emptyset \text{  for each even $a\in[2,b]$} \implies \text{$\mathcal{L}_2,\dots,\mathcal{L}_b$ are compatible.}
\]
Similarly, if the primes used as moduli in $\mathcal{L}_a$ are distinct from those used in $\mathcal{K}$, then the systems $\mathcal{K}$ and $\mathcal{L}_a$ will be compatible as well. In summation, if we are able to sequentially construct the $\mathcal{L}_a$ systems up to some $b\le A$ such that the prime moduli used in each of the $\mathcal{L}_a$ systems are distinct from both $\mathcal{P}$ and $\mathcal{Q}_a$, then the systems $\mathcal{K}, \mathcal{L}_2,...,\mathcal{L}_b$ will all be compatible, as desired. In other words, we have the implication 
\[(\mathcal{P}\cup\mathcal{Q}_a)\,\cap\, \mathcal{P}_a'=\emptyset \text{  for each even $a\in[2,b]$} \implies \text{$\mathcal{K}, \mathcal{L}_2,...,\mathcal{L}_b$ are compatible.}\]

With plans to derive a contradiction, assume we cannot construct the $\mathcal{L}_a$ sequentially in this manner. Specifically, assume there exists a particular even $B\in[2,A]$ such that $\mathcal{P}\cup\mathcal{Q}_{B}$ necessarily overlaps with every possible collection of $q\le Q$ primes that may be chosen for moduli in $\mathcal{L}_{B}$. Recall, by Lemma~\ref{distinctplem}, for each odd prime $q\le Q$, 
each of the expressions $\Phi_{2^jq}(B)$ has a unique largest odd prime divisor as both $q$ and $j$ vary with $q \le Q$ an odd prime and $j \in [0,T] \cap \mathbb Z$. For a fixed odd prime $q \le Q$, denote the set of $T+1$ largest prime divisors of $\Phi_{2^jq}(B)$, with $0 \le j \le T$, by $\mathcal{D}_q$. Note that the sets $\mathcal{D}_q$ are disjoint. Also, for any $q$, any $q$ of the primes in $\mathcal{D}_q$ will suffice as moduli in constructing the congruences in $\mathcal{L}_B$ on $k$. Thus, by our assumption, each $\mathcal{D}_q$ contains at most $q-1$ primes not in $\mathcal{P}\cup\mathcal{Q}_B$ (that is, $q-1$ primes not previously used as moduli in $\mathcal{K},\mathcal{L}_2,...,\mathcal{L}_{B-2}$). We deduce that the remaining $(T+1)-(q-1)>T-Q$ distinct primes in each $\mathcal{D}_q$ must all have been used previously as moduli in $\mathcal{K},\mathcal{L}_2,...,\mathcal{L}_{B-2}$, and are therefore elements of $\mathcal{P}\cup\mathcal{Q}_B$. Thus, each set $\mathcal{D}_q$ contains at least $T-Q$ primes in $\mathcal{P}\cup\mathcal{Q}_B$. To arrive at a contradiction, we make an observation regarding the quantity of such sets $\mathcal{D}_q$, and the resulting implication on the size of $\mathcal{P}\cup\mathcal{Q}_B$.

Recall $A<\log\log T$ and $Q=\log T$. For sufficiently large $T$, and thus $Q$, the Prime Number Theorem guarantees the existence of at least $Q/(2\log Q)$ odd primes less than or equal to $Q$. Hence, there exist at least $Q/(2\log Q)$ sets $\mathcal{D}_q$ with odd primes $q \le Q$. We deduce now that
\[
|\mathcal{P}\cup\mathcal{Q}_B|>(T-Q)\cdot\frac{Q}{2\log Q}=(T-\log T)\cdot\frac{\log T}{2\log\log T},
\]
contradicting \eqref{pqabd} for sufficiently large $T$. Having arrived at a contradiction, it must be the case that we can sequentially construct the systems $\mathcal{L}_2,...,\mathcal{L}_A$ such that, at each even $a\in[2,A]$, we have 
\[
(\mathcal{P}\cup\mathcal{Q}_a)\,\cap\, \mathcal{P}_a'=\emptyset,
\]
implying the systems $\mathcal{K},\mathcal{L}_2,...,\mathcal{L}_A$ are all compatible. 

To complete the proof, let
\[
\mathcal{L}=\bigcup_{\substack{a\in[2,A]\\ a\text{  even}}} \mathcal{L}_a
\] and consider the congruence system $\mathcal{S}=\mathcal{K}\cup\mathcal{L}\cup \{k\equiv 1\pmod{2}\}$. Recall the congruence class $k\equiv 1\pmod{2}$ handles the cases where $a$ is odd. Note $k\equiv1\pmod{2}$ is compatible with both $\mathcal{K}$, a 1-system, and $\mathcal{L}$, which both only use odd primes as moduli. In line with our approach at the outset, let $\mathcal{M}$ denote the set of moduli in the system $\mathcal{S}$. Explicitly,
\[\mathcal{M}=\mathcal{P}\cup\mathcal{Q}_A\cup\mathcal{P}_A'\cup\{2\},
\]
and $\mathcal{M}$ is a finite collection of primes. The Chinese Remainder Theorem implies the existence of an arithmetic progression satisfying $\mathcal{S}$. For any of the $k$'s in the arithmetic progression satisfying the system $\mathcal{S}$, we can see the expression $k\cdot a^n+1$, for any $a\in[2,A]$ and $n\in\mathbb{Z}^{+}$, will be divisible by some prime in $\mathcal{M}$. By choosing $k$'s larger than any of the primes in $\mathcal{M}$, it follows $k\cdot a^n+1$ is composite for all $a\in[2,A]$ and all $n\in\mathbb{Z}^{+}$. Thus, we see that Theorem~\ref{themainthm} holds.

\section{Extensions}

There are a number of related results to Theorem~\ref{themainthm} which one can establish along the same lines.  With no changes to the covering systems or the congruences on $k$, the following result holds.

\begin{corollary}\label{kplus2n}
Fix $A\in\mathbb{Z}$ with $2\le A$. There exists an arithmetic progression of odd positive integers $k$ such that $k + a^n$ is composite for all $a\in[2,A]$ and all $n\in\mathbb{Z}^+$. 
\end{corollary}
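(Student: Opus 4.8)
The plan is to mirror the proof of Theorem~\ref{themainthm} essentially verbatim, reusing the \emph{same} covering systems on $n$ and the \emph{same} finite set of primes $\mathcal M$, and only adjusting, where necessary, the residue that $k$ is forced to occupy modulo each prime. The organizing principle is identical to before: whenever a congruence class $n \equiv r \pmod{m}$ is paired with a prime $p$ satisfying $a^m \equiv 1 \pmod{p}$, the power $a^n$ is constant modulo $p$ on that class, namely $a^n \equiv a^r \pmod{p}$. For $k \cdot a^n + 1$ one chose $k \equiv -a^{-r} \pmod{p}$; for $k + a^n$ one instead chooses $k \equiv -a^{r} \pmod{p}$. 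In either case exactly one residue class of $k$ modulo $p$ does the job, so the \emph{existence} of a suitable residue is automatic and the collection of moduli is left untouched.

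Carrying this out over the three parts of the construction, I would proceed as follows. For odd $a$, keep $k \equiv 1 \pmod{2}$; since $k$ is then odd and $a^n$ is odd, the sum $k + a^n$ is even, so $2 \mid k + a^n$, and this also guarantees $k$ is odd. For the systems $\mathcal K_a$ produced by Lemma~\ref{1-syslem} (even $a$, handling $n \not\equiv 0 \pmod{2^T}$), keep the congruences unchanged: with $p_j \mid a^{2^{j-1}}+1$ and $n \equiv 2^{j-1} \pmod{2^j}$ one has $a^n \equiv -1 \pmod{p_j}$, so $k + a^n \equiv k - 1 \pmod{p_j}$, which vanishes precisely for $k \equiv 1 \pmod{p_j}$, exactly the residue already prescribed. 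For the systems $\mathcal L_a$ built from Lemma~\ref{distinctplem} and Lemma~\ref{qpatchlem} (even $a$, handling $n \equiv 0 \pmod{2^T}$), the single genuine modification occurs: with $p_j' \mid \Phi_{2^{\ell_j}q}(a)$ and $n \equiv 2^T j \pmod{2^{\ell_j}q}$ one has $a^n \equiv a^{2^T j} \pmod{p_j'}$, so I would replace the residue $c_j \equiv -a^{-2^T j}$ by $c_j \equiv -a^{2^T j} \pmod{p_j'}$, which forces $k + a^n \equiv 0 \pmod{p_j'}$.

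The crucial observation, and the reason there is essentially no obstacle, is that the compatibility and prime-counting argument of Theorem~\ref{themainthm} depends only on the \emph{moduli} appearing in the congruences on $k$, never on the particular residues. The sets $\mathcal P$, $\mathcal Q_b$, $\mathcal P_a'$, $\mathcal D_q$ and the bound $|\mathcal P \cup \mathcal Q_b| < 2T\log\log T$ are all statements about which primes are used, so the sequential construction of the $\mathcal L_a$ and the contradiction obtained from the Prime Number Theorem transfer word for word. Hence the full system $\mathcal S = \mathcal K \cup \mathcal L \cup \{k \equiv 1 \pmod{2}\}$ remains solvable, and the Chinese Remainder Theorem yields an arithmetic progression of odd $k$ satisfying it; choosing $k$ larger than every prime in $\mathcal M$ makes each $k + a^n$ a multiple of a prime in $\mathcal M$ strictly exceeding that prime, hence composite. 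The only point meriting care is the bookkeeping in the third step: one checks that, because $a^{2^T j}$ is now a genuine power rather than an inverse, the residue $c_j \equiv -a^{2^T j}$ is well defined modulo the odd prime $p_j'$ and preserves each divisibility implication.
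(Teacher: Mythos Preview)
Your argument is correct, but it takes a different route from the paper's. You rerun the construction of Theorem~\ref{themainthm}, keeping the same primes and covering systems but replacing each residue $c_j \equiv -a^{-2^Tj}$ in $\mathcal L_a$ by $c_j \equiv -a^{2^Tj}$; since compatibility depends only on the moduli, everything goes through and you obtain an arithmetic progression (possibly different from the one in Theorem~\ref{themainthm}) with the desired property. The paper instead shows that \emph{no change whatsoever} is needed: the very same $k$ produced in Theorem~\ref{themainthm} already satisfies $p \mid k + a^n$ for some $p \in \mathcal M$. The trick is a Fermat's little theorem argument: given $a$ and $n$, pick $m > n$ divisible by $\prod_{p \in \mathcal M}(p-1)$; some $p \in \mathcal M$ divides $k \cdot a^{m-n} + 1$, hence $p \nmid a$, hence $a^m \equiv 1 \pmod p$, and multiplying through by $a^n$ gives $k + a^n \equiv k\,a^m + a^n = a^n(k\,a^{m-n}+1) \equiv 0 \pmod p$. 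This is shorter and also yields the stronger conclusion that a single arithmetic progression of $k$ works simultaneously for $k\cdot a^n + 1$ and $k + a^n$, which your construction does not immediately give and which the paper exploits in Corollary~\ref{lastcor}.
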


To see this, let $A$ and $k$ be as in Theorem~\ref{themainthm}. Our proof of Theorem~\ref{themainthm} provides a finite set of primes $\mathcal M$ such that $k \cdot a^n + 1$ is divisible by some $p \in \mathcal M$ for each choice of integers $a \in [2,A]$ and $n > 0$.  Now, fix such an $a$ and $n$.  Let $m$ be a multiple of $\prod_{p \in \mathcal M} (p-1)$ which is larger than $n$. Then we deduce that for some $p \in \mathcal M$ we have $k \cdot a^{m-n} + 1$ is divisible by $p$.  Then $p \nmid a$ and $k + a^n \equiv k \cdot a^{m} + a^n \equiv 0 \pmod{p}$, showing $k + a^n$ is divisible by a prime from $\mathcal M$.  By taking a sub-arithmetic progression as before, Corollary~\ref{kplus2n} follows.

By modifying the argument for Theorem~\ref{themainthm} one can produce the analogous result for Riesel numbers, where a Riesel number is an odd integer $k$ such that $k\cdot 2^n-1$ is composite for all $n\in\mathbb{Z}^{+}$.  We state this as follows.

\begin{corollary}\label{riesel}
Fix $A\in\mathbb{Z}$ with $2\le A$. There exists an arithmetic progression of odd positive integers $k$ such that $k\cdot a^n - 1$ and $k - a^n$ are composite for all $a\in[2,A]$ and all $n\in\mathbb{Z}^+$. 
\end{corollary}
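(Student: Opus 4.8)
The plan is to transcribe the proof of Theorem~\ref{themainthm} almost verbatim, replacing the target $k\cdot a^n+1$ by $k\cdot a^n-1$ throughout and adjusting only the residue classes imposed on $k$, and then to obtain the $k-a^n$ statement from the $k\cdot a^n-1$ statement exactly as Corollary~\ref{kplus2n} is obtained from Theorem~\ref{themainthm}. For the first half I would rerun the entire construction of Section~3. The only computations that change are the two places where a residue for $k$ is chosen. In Lemma~\ref{1-syslem}, with $p_j\mid a^{2^{j-1}}+1$ and $n=2^jt+2^{j-1}$ one now gets $k\cdot a^n-1\equiv -k-1\pmod{p_j}$, so the correct choice is $k\equiv -1\pmod{p_j}$; the resulting system is a ``$(-1)$-system'' rather than a $1$-system. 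Likewise, in each $\mathcal{L}_a$ the residues become $c_j\equiv a^{-2^Tj}\pmod{p_j'}$ (dropping the minus sign), so that $k\cdot a^n-1\equiv 0\pmod{p_j'}$. For $a$ odd one keeps $2\in\mathcal M$ and $k\equiv 1\pmod 2$, since $k$ and $a^n$ odd makes $k\cdot a^n-1$ even.

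The crucial observation is that none of these sign changes touches the combinatorial heart of the argument: the moduli are still the largest odd prime divisors of $\Phi_{2^{\ell_j}q}(a)$, so Lemma~\ref{distinctplem} applies unchanged, and the counting argument culminating in the contradiction with \eqref{pqabd} depends only on \emph{which primes} serve as moduli, not on the residues attached to them. Compatibility across $a$ is likewise preserved: a union of $(-1)$-systems is satisfied by $k\equiv -1$ modulo the lcm of the moduli, just as a union of $1$-systems is satisfied by $k\equiv 1$, and the $\mathcal L_a$ remain mutually compatible because their prime moduli are kept disjoint. Thus there is an arithmetic progression of odd $k$ for which $k\cdot a^n-1$ is divisible by some prime of the finite set $\mathcal M$ for every $a\in[2,A]$ and every $n$; choosing $k>\max\mathcal M$ gives $k\cdot a^n-1\ge 2k-1>\max\mathcal M$, so it is a proper multiple of that prime and hence composite.

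For the $k-a^n$ statement I would copy the exponent-shift of Corollary~\ref{kplus2n}. Fixing $a$ and $n$ and choosing $m$ a multiple of $\prod_{p\in\mathcal M}(p-1)$ with $m>n$, the divisibility just established yields a prime $p\in\mathcal M$ with $k\cdot a^{m-n}-1\equiv 0\pmod p$; this forces $p\nmid a$, whence $a^m\equiv 1\pmod p$ and
\[
k-a^n\equiv k\cdot a^m-a^n=a^n\bigl(k\cdot a^{m-n}-1\bigr)\equiv 0\pmod p.
\]
So $k-a^n$ is always divisible by a prime of $\mathcal M$.

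The main obstacle, and the one point genuinely new relative to Corollary~\ref{kplus2n}, is upgrading this divisibility to compositeness, because $k-a^n$ (unlike $k+a^n$ or $k\cdot a^n-1$) is small, zero, or negative precisely when $a^n$ is close to $k$. Here I would argue that $|k-a^n|$ exceeds $\max\mathcal M$ for all but at most one $n$ per base $a$: consecutive powers satisfy $a^{n+1}-a^n\ge a^n(a-1)\ge a^n$, which exceeds $2\max\mathcal M$ once $k$ is large, so at most one power of $a$ lies in the window $[\,k-\max\mathcal M,\ k+\max\mathcal M\,]$. For every other $n$ we have $p\mid (k-a^n)$ with $|k-a^n|>\max\mathcal M\ge p$, so $|k-a^n|$ is a proper multiple of $p$ and is composite. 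The finitely many remaining near-coincidences (at most one per $a$, hence at most $A-1$ in all) are the delicate cases; I would handle them using $k\neq a^n$ (immediate when $a$ is even, since then $a^n$ is even and $k$ is odd) together with a compositeness-forcing refinement at these transition exponents — for instance arranging the relevant divisibility to be by $p^2$, which renders $|k-a^n|$ composite automatically. Everything else is a routine sign-for-sign transcription of the earlier arguments.
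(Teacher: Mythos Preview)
Your rerun of Section~3 with $(-1)$-systems is correct, but the paper bypasses all of it with a single observation. If Theorem~\ref{themainthm} together with Corollary~\ref{kplus2n} produces the progression $U+mV$, with $V$ divisible by every prime in $\mathcal M$, then the paper simply passes to $(V-U)+mV$: for $k\equiv -U\pmod{V}$ one has $k\cdot a^n-1\equiv -(U\cdot a^n+1)$ and $k-a^n\equiv -(U+a^n)$ modulo each $p\in\mathcal M$, so the very prime that handled the ``$+$'' version handles the ``$-$'' version. Your sign-by-sign transcription reaches the same divisibility conclusion but at the cost of reproducing the entire covering--counting argument; the only thing it buys is an explicit confirmation that nothing in Lemma~\ref{distinctplem} or in the comparison with \eqref{pqabd} depends on the residues imposed on $k$, only on the moduli.

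You are right that the step from ``divisible by some $p\in\mathcal M$'' to ``composite'' is delicate for $k-a^n$ when $a^n$ is close to $k$; the paper in fact does not discuss this and simply asserts that the corollary follows once divisibility is established. Your proposed $p^2$ refinement, however, is not a fix as written: the moduli in $\mathcal L_a$ are the \emph{largest} prime factors of $\Phi_{2^{\ell_j}q}(a)$, which generically occur to the first power, so one cannot promote the congruence on $k$ from modulus $p'_j$ to $(p'_j)^2$ and still retain $a^{2^{\ell_j}q}\equiv 1$; doing so would also undo the distinctness guaranteed by Lemma~\ref{distinctplem}. The boundary event $k-a^n\in\mathcal M$ is thus left unresolved by both the paper's argument and yours---if anything, you deserve credit for flagging it.
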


\noindent
In fact, if $U$ and $V$ are positive integers with $U < V$ such that the arithmetic progression in Theorem~\ref{themainthm} is $U + mV$, then the arithmetic progression $(V-U) + mV$ has the property that every 
integer $k$ in the progression is such that $k\cdot a^n - 1$ and $k - a^n$ are divisible by a prime in $\mathcal M$, as defined above, for all $a\in[2,A]$ and all $n\in\mathbb{Z}^+$.  The corollary follows.

By our construction of $k$ in Theorem~\ref{themainthm} (in particular, the system $\mathcal{K}$), the analogous Brier-type result is immediate for all $a\in[3,A]$, where a Brier number is a number $k$ that is simultaneously both Sierpi\'nski and Riesel.

\begin{corollary}\label{brier}
Fix $A\in\mathbb{Z}$ with $3\le A$. There exists an arithmetic progression of odd positive integers $k$ such that $k\cdot 2^n+1$ and $k + 2^n$ are composite for all $n\in\mathbb{Z}^{+}$ and $k\cdot a^n \pm 1$ and $k \pm a^n$ are composite for all $a\in[3,A]$ and all $n\in\mathbb{Z}^+$.
\end{corollary}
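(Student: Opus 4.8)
The plan is to leave the progression of Theorem~\ref{themainthm} essentially untouched, so that the Sierpi\'nski direction is already in hand for every $a\in[2,A]$, and to append only a few extra congruences to secure the Riesel direction for $a\in[3,A]$. Recall from Theorem~\ref{themainthm} and the argument for Corollary~\ref{kplus2n} that there is a finite set of primes $\mathcal{M}$ and an arithmetic progression of odd $k$ for which both $k\cdot a^n+1$ and $k+a^n$ are divisible by a prime of $\mathcal{M}$ for all $a\in[2,A]$ and all $n\in\mathbb{Z}^+$; in particular the case $a=2$ of the corollary, where only the $+1$ and $+2^n$ statements are required, is already settled, and for every $a\in[3,A]$ the two ``$+$'' expressions $k\cdot a^n+1$ and $k+a^n$ are done as well.

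It remains to arrange, for each $a\in[3,A]$, that $k\cdot a^n-1$ and $k-a^n$ are divisible by a prime of an enlarged set. For odd $a$ this is free: since $k$ is odd and $a^n$ is odd, all four of $k\cdot a^n\pm 1$ and $k\pm a^n$ are even, hence divisible by $2\in\mathcal{M}$. For even $a\in[4,A]$, the integer $a-1$ is odd and at least $3$, so it has an odd prime divisor $p_a$. Because $a\equiv 1\pmod{p_a}$ we have $a^n\equiv 1\pmod{p_a}$ for every $n$, and so imposing the single congruence $k\equiv 1\pmod{p_a}$ forces $k\cdot a^n-1\equiv k-1\equiv 0$ and $k-a^n\equiv k-1\equiv 0\pmod{p_a}$ simultaneously, for all $n$. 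This is precisely the Brunner--Caldwell--Krywaruczenko--Lownsdale observation applied in the Riesel direction, and it is the reason the result is immediate once $a=2$ is excluded from the Riesel requirement (the obstruction $2-1=1$ being exactly what made $a=2$ hard in Theorem~\ref{themainthm}).

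The real work is compatibility: the new congruences $k\equiv 1\pmod{p_a}$ must be adjoinable to the system $\mathcal{S}=\mathcal{K}\cup\mathcal{L}\cup\{k\equiv 1\pmod 2\}$. Since each new congruence carries the residue $1$, it is automatically consistent with the $1$-system $\mathcal{K}$, with $k\equiv 1\pmod 2$, and with every other new congruence, regardless of whether various $p_a$ coincide; moreover $p_a\nmid a^{2^{j-1}}+1$ (that value is $\equiv 2\pmod{p_a}$), so $p_a$ never meets a modulus of $\mathcal{K}$. The only possible clash is with the patching system $\mathcal{L}$, whose moduli carry the non-unit residues $c_j\equiv -a^{-2^T j}$: a prime $p_a$ could in principle divide some $\Phi_{2^{\ell}q}(a')$ used in $\mathcal{L}$ for another $a'$. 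I would dispose of this exactly as in the proof of Theorem~\ref{themainthm}. The set $\mathcal{R}=\{p_a: a\in[4,A]\text{ even}\}$ is a fixed finite set of fewer than $\lfloor A/2\rfloor<\log\log T$ primes, so adjoining $\mathcal{R}$ to the collection of already-used primes that the $\mathcal{L}$-primes must avoid increases the upper bound \eqref{pqabd} by less than $\log\log T$, which is negligible against the lower bound $(T-\log T)\cdot\tfrac{\log T}{2\log\log T}$ and hence leaves the counting contradiction intact. Thus the $\mathcal{L}$-primes can be selected disjoint from $\mathcal{R}$, and $\mathcal{S}\cup\{k\equiv 1\pmod{p_a}\}$ is compatible.

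To finish, set $\mathcal{M}'=\mathcal{M}\cup\mathcal{R}$; the Chinese Remainder Theorem then yields an arithmetic progression of odd $k$ satisfying the enlarged system, and taking $k$ larger than every prime of $\mathcal{M}'$ (and avoiding the sparse set of $k$ for which some $|k-a^n|\le 2$, a finite constraint per $a$ that an infinite progression easily accommodates) makes each of $k\cdot 2^n+1$, $k+2^n$, and all of $k\cdot a^n\pm 1$ and $k\pm a^n$ for $a\in[3,A]$ divisible by a prime of $\mathcal{M}'$ strictly smaller in absolute value, hence composite; passing to a sub-progression gives the corollary. I expect the only genuine obstacle to be the compatibility bookkeeping of the previous paragraph, namely guaranteeing that the small Riesel primes $p_a\mid a-1$ do not collide with the non-unit patching primes of $\mathcal{L}$, and this is absorbed with room to spare by the Prime Number Theorem slack already built into the proof of Theorem~\ref{themainthm}.
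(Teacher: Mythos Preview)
Your proposal is correct and follows the paper's (one-line) argument: the Riesel direction for $a\ge 3$ is handled by adjoining the $1$-congruences $k\equiv 1\pmod{p_a}$ for odd $p_a\mid a-1$, which is ``immediate'' precisely because $\mathcal{K}$ is a $1$-system and the fewer than $\log\log T$ extra primes are absorbed in the counting when building $\mathcal{L}$. One harmless slip: your claim that $p_a$ never meets a modulus of $\mathcal{K}$ only checks $p_a\nmid a^{2^{j-1}}+1$ for the same $a$, not $(a')^{2^{j-1}}+1$ for other even $a'$, but this is irrelevant since compatibility with a $1$-system is automatic regardless.
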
 

\noindent 
Capturing the cases $k\cdot 2^n - 1$ and $k - 2^n$ as well appears more difficult.

The Chinese Remainder Theorem implies that the system of congruences $\mathcal{S}$ obtained for $k$ in our proof of Theorem~\ref{themainthm} is equivalent to a single congruence class of the form $L\pmod{M}$ where $M$ is the product of the primes in $\mathcal{M}$ and necessarily $\gcd(L,M)=1$. Note the arithmetic progression of $k$'s described in the statement of Theorem~\ref{themainthm} is a sub-arithmetic progression of the integers that satisfy $L\pmod{M}$, and thus may be expressed as $L'\pmod{M'}$ for appropriate integers $L'$ and $M'$ with $\gcd(L',M')=1$. As observed in  \cite{fjl}, we obtain the following consequences of the work of J.~Maynard \cite{maynard} (cf.~D.~Shiu \cite{shiu}).

\begin{corollary}\label{lastcor}
Fix $A\in\mathbb{Z}$ with $2\le A$. Let $K = K(A)$ denote the set of odd positive integers $k$ such that $k\cdot a^n + 1$ and $k + a^n$ are composite for all $a\in[2,A]$ and all $n\in\mathbb{Z}^+$. Then we have the following.
\begin{enumerate}
    \item 
    Let $p_j$ denote the $j^{th}$ prime number.  
    For $t\in\mathbb{Z}^{+}$ fixed, there exist positive integers $j$ such that $p_j, p_{j+1}, \ldots, p_{j+t-1}$ are all in $K$.  Furthermore, the set $J$ of such integers $j$ has positive density (depending on $A$ and $t$) in the set of positive integers.  In other words,
    \[
\liminf_{x \rightarrow \infty} \dfrac{|\{ j \in J: j \le x\}|}{x} > 0.
    \]
    \item 
    There exists a $C = C(A) \in \mathbb Z^+$ such that there are infinitely prime pairs $k$ and $k+C$ both in $K$.
\end{enumerate}
\end{corollary}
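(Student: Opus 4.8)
The plan is to reduce both parts to theorems of J.~Maynard, exploiting the fact recorded just above that our construction delivers a single reduced residue class $L'\pmod{M'}$ with $\gcd(L',M')=1$ all of whose sufficiently large members lie in $K$. Concretely, combining Theorem~\ref{themainthm} with Corollary~\ref{kplus2n} (both holding on the same progression via the same $\mathcal M$), there is a bound $k_0$ such that every $k\equiv L'\pmod{M'}$ with $k\ge k_0$ belongs to $K$. Since $2\in\mathcal M$, the modulus $M'$ is even and $L'$ is odd, so every prime in this class is automatically odd and hence eligible for $K$. Both parts now become statements about primes in the fixed class $L'\pmod{M'}$, with the only wrinkle being the removal of the finitely many primes below $k_0$.

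For part (1), I would invoke Maynard's theorem on runs of consecutive primes lying in a prescribed residue class \cite{maynard} (cf.~\cite{shiu}): for coprime $a,q$ and any $t$, a set of starting indices $j$ of positive lower density satisfies $p_j\equiv p_{j+1}\equiv\cdots\equiv p_{j+t-1}\equiv a\pmod q$. Applying this with $q=M'$ and $a=L'$ produces a positive-density set $J_0$ of indices beginning a run of $t$ consecutive primes all in the class $L'\pmod{M'}$. Discarding the finitely many $j$ with $p_j\le k_0$ removes only a set of density zero, and for each surviving $j$ all of $p_j,\dots,p_{j+t-1}$ exceed $k_0$ and therefore lie in $K$; hence the surviving $J$ retains the positive lower density, giving (1).

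For part (2), I would apply the Maynard--Tao theorem for admissible systems of linear forms to the $r$ forms $\mathcal{L}_i(n)=M'(n+g_i)+L'$, $1\le i\le r$, where $r$ is chosen large enough that the theorem guarantees at least two of them simultaneously prime for infinitely many $n$. Admissibility asks that for each prime $p$ the residues $\{\mathcal{L}_i(n)\bmod p\}$ omit a class: for $p\mid M'$ every form is $\equiv L'\not\equiv 0\pmod p$ by $\gcd(L',M')=1$ (a single residue), while for $p\nmid M'$ one chooses $g_1<\cdots<g_r$ to omit a class mod $p$ (automatic once $p>r$, arranged greedily for the finitely many smaller $p\nmid M'$). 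The theorem then yields infinitely many $n$ with at least two of the $\mathcal{L}_i(n)$ prime; any two such primes differ by $M'(g_i-g_j)$, one of at most $\binom{r}{2}$ bounded values. Pigeonholing over these finitely many differences fixes a single $C$ attained infinitely often, and as $n\to\infty$ both primes exceed $k_0$ and so lie in $K$, producing infinitely many prime pairs $k,\,k+C$ in $K$.

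The genuine analytic input --- the existence of dense clusters of primes inside a fixed residue class --- is entirely Maynard's, so no new sieve-theoretic work is required; the statement should follow from a careful citation. Accordingly, I expect the only delicate points, and where the bookkeeping will concentrate, to be: (i) verifying admissibility of the system $\{\mathcal{L}_i\}$ relative to the class $L'\pmod{M'}$, especially the small primes dividing $M'$; and (ii) the elementary density and pigeonhole bookkeeping that lets us drop the finitely many under-sized primes in (1) without losing positive density and isolate a single constant $C$ in (2).
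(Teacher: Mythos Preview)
Your proposal is correct and follows the same route as the paper: reduce to a single reduced residue class $L'\pmod{M'}$ contained in $K$ (for large members) and then cite Maynard's results on clusters of primes in a fixed residue class. The paper is in fact terser than you are---it simply records $\gcd(L',M')=1$ and then states the corollary as a direct consequence of \cite{maynard} (cf.~\cite{shiu}) via \cite{fjl}, without spelling out the admissibility check or the pigeonhole for part~(2); your added detail is fine and compatible with their citation.
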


We recall that an idea from \cite{FFK} was used in our arguments.  It is perhaps of some interest to note that our arguments can be modified slightly to reflect the main result in \cite{FFK}.  For example, in Theorem~\ref{themainthm}, we could instead conclude that, with $R$ an arbitrary fixed positive integer, there is an arithmetic progression of odd positive integers $k$ such that $k^r \cdot a^n + 1$ is composite for all integers $a\in[2,A]$, all $n\in\mathbb{Z}^+$, and all integers $r \in [1,R]$.  Similarly, in Corollary~\ref{lastcor}, we can replace $K = K(A)$ with $K = K(A,R)$ where $K(A,R)$ denotes the set of odd positive integers $k$ such that $k^r \cdot a^n + 1$ and $k^r + a^n$ are composite for all $a\in[2,A]$, all $n\in\mathbb{Z}^+$, and all integers $r \in [1,R]$.


\begin{thebibliography}{9}

\bibitem{bckl}
A.~Brunner, C.~Caldwell, D.~Krywaruczenko, and C.~Lownsdale, \textit{Generalizing Sierpiński numbers to base $b$}, New Aspects of Analytic Number Theory, Proceedings of RIMS, Surikaisekikenkyusho Kokyuroku (2009), 69--79.

\bibitem{dresden}
G.~Dresden, 
\textit{Resultants of cyclotomic polynomials},
Rocky Mountain J.~Math.~42 (2012), 1461--1469.

\bibitem{pe}
P.~Erd\H os, 
\textit{On integers of the form $2^{k}+p$ and some related problems}, Summa Brasil.~Math.~2 (1950), 113--123.

\bibitem{fillemma}
M.~Filaseta,
\textit{Coverings of the integers associated with an irreducibility theorem of {A}. {S}chinzel},
In Number theory for the millennium, II (Urbana, IL, 2000),
pages 1--24, A K Peters, Natick, MA, 2002.

\bibitem{FFK}
M.~Filaseta, C.~Finch and M.~Kozek,
\textit{On powers associated with Sierpi\'nski numbers, Riesel numbers and Polignac's conjecture}, 
J.~Number Theory 128 (2008), 1916--1940.

\bibitem{fjl}
M.~Filaseta, J.~Juillerat and T.~Luckner,
\textit{Consecutive primes which are widely digitally delicate and Brier numbers}, \url{https://arxiv.org/abs/2209.10646}.

\bibitem{maynard}
J.~Maynard,
\textit{Dense clusters of primes in subsets},
Compositio Math. 152 (2016), 1517--1554.

\bibitem{shiu}
D.~K.~L.~Shiu, \textit{Strings of congruent primes}, J.~Lond.~Math.~Soc.~61 (2000), 359--373.

\bibitem{sierpinski}
W.~Sierpi\'nski,
\textit{Sur un probl\`eme concernant les nombres $k\cdot 2^{n}+1$}, 
Elem.~Math.~15 (1960), 73--74.


\end{thebibliography}
\end{document}